\tikzset{x=1cm,y=1cm,z=1cm}
\pgfplotsset{compat=1.16}
\title{Using matrix sparsification to solve tropical linear vector equations\thanks{
Relational and Algebraic Methods in Computer Science, LNCS 14787, Springer, 2024, 193-206.}
}
\author{N. Krivulin\thanks{Faculty of Mathematics and Mechanics, Saint Petersburg State University, 28 Universitetsky Ave., St.~Petersburg, 198504, Russia, nkk@math.spbu.ru.}
}
\date{}
\newtheorem{theorem}{Theorem}
\newtheorem{lemma}[theorem]{lemma}
\theoremstyle{definition}
\begin{document}

\maketitle

\begin{abstract}
A linear vector equation in two unknown vectors is examined in the framework of tropical algebra dealing with the theory and applications of semirings and semifields with idempotent addition. We consider a two-sided equation where each side is a tropical product of a given matrix by one of the unknown vectors. We use a matrix sparsification technique to reduce the equation to a set of vector inequalities that involve row-monomial matrices obtained from the given matrices. An existence condition of solutions for the inequalities is established, and a direct representation of the solutions is derived in a compact vector form. To illustrate the proposed approach and to compare the obtained result with that of an existing solution procedure, we apply our solution technique to handle two-sided equations known in the literature. Finally, a computational scheme based on the approach to derive all solutions of the two-sided equation is discussed.
\\

\textbf{Keywords:} idempotent semifield, two-sided linear vector equation, sparsified matrix, row-monomial matrix, complete solution.
\\

\textbf{MSC (2020):} 15A80, 15A06
\end{abstract}

\section{Introduction}

In this paper, we examine a vector equation that is represented in the form
\begin{equation*}
\bm{A}\bm{x}
=
\bm{B}\bm{y},
\end{equation*}
where $\bm{A}$ and $\bm{B}$ are given matrices, $\bm{x}$ and $\bm{y}$ are unknown vectors, and the matrix-vector multiplication is defined in the tropical algebra setting. The equation has the unknowns on both sides, and therefore it is often referred to as two-sided. We note that this equation can be transformed by some simple algebra into an equivalent two-sided equation with the same unknown vector on both sides, which sufficiently  extends the class of problems under consideration. 

In the framework of tropical algebra, which deals with the theory and applications of idempotent semirings and semifields \cite{Golan2003Semirings,Heidergott2006Maxplus,Gondran2008Graphs,Butkovic2010Maxlinear,Maclagan2015Introduction}, the solution of the two-sided equation is a challenging problem not completely solved. Since the first works of P.~Butkovi\v{c} \cite{Butkovic1978Certain,Butkovic1981Solution,Butkovic1984Elimination} on the problem, the solution of the equation is still a topic of interest (see more recent papers such as \cite{Gaubert2009Tropical,Lorenzo2011Algorithm,Jones2019Twosided,Krivulin2023Solution}). This equation along with the two-sided inequality $\bm{A}\bm{x}\leq\bm{B}\bm{y}$ and (nonhomogeneous) equation $\bm{A}\bm{x}\oplus\bm{c}=\bm{B}\bm{y}\oplus\bm{d}$ find application in a range of areas in mathematics and computer science from geometry of tropical polyhedral cones \cite{Zimmermann1977General,Gaubert2009Tropical,Allamigeon2013Computing} to mean payoff games \cite{Akian2012Tropical,Gaubert2012Tropical} and machine scheduling \cite{Butkovic1984Elimination,Jones2019Twosided}.  

Several approaches to handle two-sided equations are known in the literature (see an overview of the state-of-art on the solution techniques given in Chapter~7 of \cite{Butkovic2010Maxlinear}). Existing approaches usually employ computational procedures based on iterative algorithms to find a solution or to indicate that the equation has no solutions \cite{Butkovic1981Solution,Butkovic1984Elimination,Butkovic1984Properties,Walkup1998General,Cuninghamegreen2001Equation,Butkovic2006Strongly,Lorenzo2011Algorithm,Jones2019Twosided,Krivulin2023Solution}. However, these approaches do not offer a direct analytical solution, which is of theoretical interest and particular importance.

In an attempt to move towards constructing an analytical solution, we propose a new solution scheme, which is based on the matrix sparsification technique developed to solve tropical optimization problems and two-sided inequalities in \cite{Krivulin2015Soving,Krivulin2017Algebraic,Krivulin2017Complete,Krivulin2018Complete,Krivulin2018Solution,Krivulin2020Complete}. We use this technique to reduce the two-sided equation to a set of vector inequalities that involve row-monomial matrices obtained from the given matrices. An existence condition of solutions for the inequalities is established, and a direct representation of the solutions is derived in a compact vector form. To illustrate the approach and to compare the result with those of existing solution procedures, we apply our solution technique to handle two-sided equations known in the literature. Finally, a computational scheme based on the approach to derive all solutions of the two-sided equation is discussed.

The paper is organized as follows. In Section~\ref{S-BDPR}, we give an overview of basic definitions and preliminary results of tropical algebra, which underlie subsequent developments. Section~\ref{S-VI} offers a solution of a system of vector inequalities, which is a key ingredient in the solution of the two-sided equation. In Section~\ref{S-STSE}, we present our main results, which include a necessary and sufficient condition for solutions of the equation and represent a direct solution of the equation in a compact vector form. Numerical examples are given in Section~\ref{S-NE}. Section~\ref{S-C} presents some concluding remarks.

\section{Basic Definitions and Preliminary Results}
\label{S-BDPR}
We start with basic definitions and preliminary results of tropical (idempotent) algebra that offer a formal framework for the development of solution methods for the two-sided equation. For more details on the theory and applications of tropical algebra, one can consult recent monographs and textbooks \cite{Golan2003Semirings,Heidergott2006Maxplus,Gondran2008Graphs,Butkovic2010Maxlinear,Maclagan2015Introduction}.

\subsection{Idempotent Semifield}

Let $\mathbb{X}$ be a set that is closed under associative and commutative operations $\oplus$ (addition) and $\otimes$ (multiplication), which have distinct neutral elements $\mathbb{0}$ (zero) and $\mathbb{1}$ (one) respectively. Suppose that $(\mathbb{X},\oplus,\mathbb{0})$ is an idempotent commutative monoid, $(\mathbb{X}\setminus\{\mathbb{0}\},\otimes,\mathbb{1})$ is an Abelian group, and multiplication distributes over addition. Under these conditions, the algebraic system $(\mathbb{X},\oplus,\otimes,\mathbb{0},\mathbb{1})$ is commonly referred to as an idempotent semifield.

In this semifield, addition is idempotent ($x\oplus x=x$ for all $x$), and multiplication is invertible (for each $x\ne\mathbb{0}$, there exists $x^{-1}$ such that $x\otimes x^{-1}=\mathbb{1}$). In the sequel, we drop the multiplication sign $\otimes$ to save writing. The power notation with integer exponents indicates repeated multiplication defined by $x^{0}=\mathbb{1}$, $x^{p}=xx^{p-1}$, $x^{-p}=(x^{-1})^{p}$ and $\mathbb{0}^{p}=\mathbb{0}$ for all nonzero $x$ and integer $p\geq1$.

Idempotent addition induces a partial order such that $x\leq y$ if and only if $x\oplus y=y$. With respect to this order, addition satisfies an extremal property (the majority law), which means that the inequalities $x\leq x\oplus y$ and $y\leq x\oplus y$ hold for all $x,y$. Both addition and multiplication are isotone (the inequality $x\leq y$ results in the inequalities $x\oplus z\leq y\oplus z$ and $xz\leq yz$). Inversion is antitone ($x\leq y$ yields $x^{-1}\geq y^{-1}$ for nonzero $x,y$). Finally, the inequality $x\oplus y\leq z$ is equivalent to the system of the inequalities $x\leq z$ and $y\leq z$.

Moreover, the idempotent semifield is assumed to be selective ($x\oplus y\in\{x,y\}$ for all $x,y$), which turns the above partial order into a total order.

An example of the idempotent semifield under consideration is the real semifield $\mathbb{R}_{\max,+}=(\mathbb{R}\cup\{-\infty\},\max,+,-\infty,0)$ also called $(\max,+)$-algebra. In this semifield, addition $\oplus$ is defined as $\max$, multiplication $\otimes$ as arithmetic addition $+$, zero $\mathbb{0}$ as $-\infty$, and one $\mathbb{1}$ as $0$. For each $x\ne-\infty$, there exists the inverse $x^{-1}$, which coincides with the opposite number $-x$ in the conventional algebra. The power $x^{y}$ corresponds to the ordinary product $x\times y$. The order, which is associated with addition, corresponds to the natural linear order on $\mathbb{R}$.

The system $\mathbb{R}_{\min,\times}=(\mathbb{R}_{+}\cup\{+\infty\},\min,\times,+\infty,1)$ with $\mathbb{R}_{+}=\{x\in\mathbb{R}|\ x>0\}$ is another example known as $\min$-algebra. It is equipped with $\oplus=\min$, $\otimes=\times$, $\mathbb{0}=+\infty$ and $\mathbb{1}=1$. Inversion and exponentiation are defined in the usual way. The order induced by addition is opposite to the natural linear order on $\mathbb{R}$.

\subsection{Matrices and Vectors}

The algebra of matrices with entries from an idempotent semifield is routinely defined. The set of matrices over $\mathbb{X}$ which consist of $m$ rows and $n$ columns is denoted by $\mathbb{X}^{m\times n}$. A matrix with all entries equal to $\mathbb{0}$ is the zero matrix denoted by $\bm{0}$. A matrix that has no rows with all entries equal to $\mathbb{0}$ is called row-regular.

Any matrix of one column (row) is a column (row) vector. The set of column vectors with $n$ elements over $\mathbb{X}$ is denoted by $\mathbb{X}^{n}$. All vectors below are column vectors unless otherwise specified. A vector with all elements equal to $\mathbb{0}$ is the zero vector denoted by $\bm{0}$. A vector without elements equal to $\mathbb{0}$ is called regular.

Matrix operations follow the standard entrywise computation schemes where arithmetic addition and multiplication are replaced by $\oplus$ and $\otimes$. For any conforming matrices $\bm{A}=(a_{ij})$, $\bm{B}=(b_{ij})$, $\bm{C}=(c_{ij})$ and scalar $x$, matrix addition, matrix multiplication and multiplication by scalar are given by
\begin{equation*}
\{\bm{A}\oplus\bm{B}\}_{ij}
=
a_{ij}\oplus b_{ij},
\qquad
\{\bm{A}\bm{C}\}_{ij}
=
\bigoplus_{k}a_{ik}c_{kj},
\qquad
\{x\bm{A}\}_{ij}
=
xa_{ij}.
\end{equation*}

The monotonicity of scalar operations and other properties associated with the order relation defined on $\mathbb{X}$ are extended to matrices, where the relations for matrices are understood componentwise. 

For any nonzero matrix $\bm{A}=(a_{ij})\in\mathbb{X}^{m\times n}$, the multiplicative inverse transpose (or the conjugate) is the matrix $\bm{A}^{-}=(a_{ij}^{-})\in\mathbb{X}^{n\times m}$ with the entries given by the condition: $a_{ij}^{-}=a_{ji}^{-1}$ if $a_{ji}\ne\mathbb{0}$, and $a_{ij}^{-}=\mathbb{0}$ otherwise. 

Consider the set $\mathbb{X}^{n\times n}$ of square matrices of order $n$. A matrix with the entries equal to $\mathbb{1}$ on the diagonal and to $\mathbb{0}$ elsewhere is the identity matrix denoted $\bm{I}$. For any matrix $\bm{A}\in\mathbb{X}^{n\times n}$ and integer $p>0$, the matrix powers are defined as $\bm{A}^{0}=\bm{I}$ and $\bm{A}^{p}=\bm{A}\bm{A}^{p-1}$. The trace of the matrix $\bm{A}=(a_{ij})$ is given by
\begin{equation*}
\operatorname{tr}\bm{A}
=
a_{11}\oplus a_{22}\oplus\cdots\oplus a_{nn}
=
\bigoplus_{i=1}^{n}a_{ii}.
\end{equation*}

The trace is monotone, which means that the inequality $\bm{A}\leq\bm{B}$ results in the inequality $\operatorname{tr}\bm{A}\leq\operatorname{tr}\bm{B}$. Further properties of trace are given by the equalities
\begin{equation*}
\operatorname{tr}(\bm{A}\oplus\bm{B})
=
\operatorname{tr}\bm{A}
\oplus
\operatorname{tr}\bm{B},
\qquad
\operatorname{tr}(\bm{A}\bm{C})
=
\operatorname{tr}(\bm{C}\bm{A}),
\qquad
\operatorname{tr}(x\bm{A})
=
x\operatorname{tr}\bm{A},
\end{equation*}
which are valid for all matrices $\bm{A}$, $\bm{B}$, $\bm{C}$ of appropriate size and scalar $x$.

For any matrix $\bm{A}\in\mathbb{X}^{n\times n}$, a trace function is defined that maps $\bm{A}$ to
\begin{equation*} 
\operatorname{Tr}(\bm{A})
=
\operatorname{tr}\bm{A}
\oplus
\operatorname{tr}(\bm{A}^{2})
\oplus\cdots\oplus
\operatorname{tr}(\bm{A}^{n})
=
\bigoplus_{i=1}^{n}\operatorname{tr}(\bm{A}^{i}).
\end{equation*} 

This function is monotone since the inequality $\bm{A}\leq\bm{B}$ yields the inequality $\operatorname{tr}(\bm{A}^{i})\leq\operatorname{tr}(\bm{B}^{i})$ for each integer $i\geq0$, and therefore $\operatorname{Tr}(\bm{A})\leq\operatorname{Tr}(\bm{B})$. It retains the cyclic property of trace, which implies that $\operatorname{Tr}(\bm{A}\bm{B})=\operatorname{Tr}(\bm{B}\bm{A})$.

The Kleene star operator takes the matrix $\bm{A}$ to the Kleene star matrix
\begin{equation*} 
\bm{A}^{\ast}
=
\bm{I}\oplus\bm{A}\oplus\bm{A}^{2}
\oplus\cdots
=
\bigoplus_{i\geq0}\bm{A}^{i}.
\end{equation*} 

Suppose that the condition $\operatorname{Tr}(\bm{A})\leq\mathbb{1}$ holds. It is not difficult to verify that under this condition, the Kleene star matrix becomes the finite sum (we assume below that the condition is satisfied whenever a Kleene matrix is defined):
\begin{equation*} 
\bm{A}^{\ast}
=
\bm{I}\oplus\bm{A}\oplus\cdots\oplus\bm{A}^{n-1}
=
\bigoplus_{i=0}^{n-1}\bm{A}^{i}.
\end{equation*} 

Moreover, since $\bm{A}^{p}\leq\bm{A}\bm{A}^{\ast}$ for $p\geq1$ and $\operatorname{tr}(\bm{A}\bm{A}^{\ast})=\operatorname{Tr}(\bm{A})$, we have 
\begin{equation*}
\operatorname{tr}(\bm{A}^{p})
\leq
\operatorname{Tr}(\bm{A}),
\qquad
p\geq1.
\end{equation*} 

We complete the overview with the next result. A row-regular matrix with exactly one nonzero entry in each row is called row-monomial. If a matrix $\bm{A}$ is row-monomial, then the following matrix inequalities are valid (which correspond to the univalent and total properties in the context of relational algebra):
\begin{equation*}
\bm{A}^{-}\bm{A}
\leq
\bm{I},
\qquad
\bm{A}\bm{A}^{-}
\geq
\bm{I}.
\end{equation*}

\section{Vector Inequalities}
\label{S-VI}

The purpose of this section is to examine a system of vector inequalities that is a key ingredient in the subsequent solution of the two-sided equation. As a preliminary result, we use the solution of the problem: given a square matrix $\bm{A}\in\mathbb{X}^{n\times n}$, find all regular solutions $\bm{x}\in\mathbb{X}^{n}$ to the inequality 
\begin{equation}
\bm{A}\bm{x}
\leq
\bm{x}.
\label{I-Axlex}
\end{equation}

A complete solution of inequality \eqref{I-Axlex} can be obtained as follows \cite{Krivulin2015Extremal,Krivulin2015Multidimensional}.
\begin{lemma}
\label{L-Axlex}
For any matrix $\bm{A}$, the following statements are true.
\begin{enumerate}
\item
If $\operatorname{Tr}(\bm{A})\leq\mathbb{1}$, then all regular solutions of inequality \eqref{I-Axlex} are given in parametric form by $\bm{x}=\bm{A}^{\ast}\bm{u}$ where $\bm{u}\ne\bm{0}$ is any parameter vector such that $\bm{x}$ is regular.
\item
If $\operatorname{Tr}(\bm{A})>\mathbb{1}$, then there is no regular solution.
\end{enumerate}
\end{lemma}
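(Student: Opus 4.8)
The plan is to treat the two statements separately, splitting the first into the usual sufficiency and completeness (necessity) directions, and handling the second by contradiction.

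For statement~1, I would first verify that every vector of the form $\bm{x}=\bm{A}^{\ast}\bm{u}$ solves \eqref{I-Axlex}. The key observation is the identity $\bm{A}^{\ast}=\bm{I}\oplus\bm{A}\bm{A}^{\ast}$, which is immediate from the definition of the Kleene star and holds as a genuine finite equality under the assumption $\operatorname{Tr}(\bm{A})\leq\mathbb{1}$. By the extremal property this yields $\bm{A}\bm{A}^{\ast}\leq\bm{A}^{\ast}$, whence $\bm{A}\bm{x}=\bm{A}\bm{A}^{\ast}\bm{u}\leq\bm{A}^{\ast}\bm{u}=\bm{x}$ by isotonicity of multiplication. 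This direction places no restriction on $\bm{u}$ beyond the one needed to make $\bm{x}$ regular.

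The completeness direction carries the real content. Starting from an arbitrary regular solution $\bm{x}$ of $\bm{A}\bm{x}\leq\bm{x}$, I would iterate the inequality: isotonicity gives $\bm{A}^{2}\bm{x}\leq\bm{A}\bm{x}\leq\bm{x}$ and, by induction, $\bm{A}^{k}\bm{x}\leq\bm{x}$ for all $k\geq0$. Summing over $k=0,\dots,n-1$ and using $\bm{A}^{\ast}=\bigoplus_{i=0}^{n-1}\bm{A}^{i}$ produces $\bm{A}^{\ast}\bm{x}\leq\bm{x}$, while $\bm{A}^{\ast}\geq\bm{I}$ gives the reverse inequality $\bm{A}^{\ast}\bm{x}\geq\bm{x}$. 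Hence $\bm{A}^{\ast}\bm{x}=\bm{x}$, so $\bm{x}$ is itself representable as $\bm{A}^{\ast}\bm{u}$ with the admissible choice $\bm{u}=\bm{x}\ne\bm{0}$. Together the two directions show that the parametric family exhausts all regular solutions.

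For statement~2, I would argue by contradiction. Suppose a regular $\bm{x}$ satisfies $\bm{A}\bm{x}\leq\bm{x}$; then, as above, $\bm{A}^{i}\bm{x}\leq\bm{x}$ for every $i\geq1$. The hypothesis $\operatorname{Tr}(\bm{A})>\mathbb{1}$ supplies an index $i$ with $\operatorname{tr}(\bm{A}^{i})>\mathbb{1}$, hence a diagonal entry $\{\bm{A}^{i}\}_{jj}>\mathbb{1}$. Reading off the $j$-th component and using the majority law, $\{\bm{A}^{i}\bm{x}\}_{j}\geq\{\bm{A}^{i}\}_{jj}x_{j}$. The crux is then to promote this to the strict inequality $\{\bm{A}^{i}\}_{jj}x_{j}>x_{j}$: this is exactly where regularity enters, since $x_{j}\ne\mathbb{0}$ is invertible and multiplication by an invertible element is strictly isotone in the selective, hence totally ordered, semifield. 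Consequently $\{\bm{A}^{i}\bm{x}\}_{j}>x_{j}$, contradicting $\bm{A}^{i}\bm{x}\leq\bm{x}$, so no regular solution can exist.

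I expect the main obstacle to be precisely this strict-inequality step in statement~2: one must carefully invoke both the regularity of $\bm{x}$ (to guarantee $x_{j}$ is invertible) and the total order coming from selectivity (so that $\{\bm{A}^{i}\}_{jj}>\mathbb{1}$ genuinely propagates to $\{\bm{A}^{i}\}_{jj}x_{j}>x_{j}$). By contrast, the completeness half of statement~1 is routine once the iteration $\bm{A}^{k}\bm{x}\leq\bm{x}$ is in place, and the sufficiency half reduces to the single identity $\bm{A}^{\ast}=\bm{I}\oplus\bm{A}\bm{A}^{\ast}$.
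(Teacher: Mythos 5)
The paper does not prove this lemma at all: it is imported as a known preliminary result with a citation to \cite{Krivulin2015Extremal,Krivulin2015Multidimensional}, so there is no in-paper argument to compare against. Your proof is correct and is essentially the standard one: the sufficiency step via $\bm{A}\bm{A}^{\ast}\leq\bm{A}^{\ast}$, the completeness step via $\bm{A}^{\ast}\bm{x}=\bm{x}$ (taking $\bm{u}=\bm{x}$), and the nonexistence step by locating a diagonal entry $\{\bm{A}^{i}\}_{jj}>\mathbb{1}$ and using invertibility of $x_{j}$ together with the total order to get the strict inequality $\{\bm{A}^{i}\}_{jj}x_{j}>x_{j}$ are all sound; the only point worth making explicit is that the identity $\bm{A}^{\ast}=\bm{I}\oplus\bm{A}\bm{A}^{\ast}$ for the \emph{finite} sum $\bigoplus_{i=0}^{n-1}\bm{A}^{i}$ relies on $\bm{A}^{n}\leq\bm{A}^{\ast}$, which is exactly the finiteness fact the paper asserts under $\operatorname{Tr}(\bm{A})\leq\mathbb{1}$, and which you correctly invoke rather than reprove.
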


Suppose now that $\bm{A}\in\mathbb{X}^{k\times n}$ and $\bm{B}\in\mathbb{X}^{n\times k}$ are given matrices, and the problem is to find a regular solution in the form of a pair of regular vectors $\bm{x}\in\mathbb{X}^{n}$ and $\bm{y}\in\mathbb{X}^{k}$ that satisfy the system of inequalities
\begin{equation}
\bm{A}\bm{x}
\leq
\bm{y},
\qquad
\bm{B}\bm{y}
\leq
\bm{x}.
\label{I-Axley-Bylex}
\end{equation}

The next result offers a complete solution to system \eqref{I-Axley-Bylex}.
\begin{lemma}
\label{L-Axley-Bylex}
For any matrices $\bm{A}$ and $\bm{B}$, the system at \eqref{I-Axley-Bylex} has regular solutions if and only if the following condition holds:
\begin{equation*}
\operatorname{Tr}(\bm{A}\bm{B})
\leq
\mathbb{1}.
\end{equation*}

Under this condition, all solutions are given in parametric form by
\begin{equation*}
\begin{aligned}
\bm{x}
&=
(\bm{B}\bm{A})^{\ast}\bm{u}
\oplus
\bm{B}(\bm{A}\bm{B})^{\ast}\bm{v},
\\
\bm{y}
&=
\bm{A}(\bm{B}\bm{A})^{\ast}\bm{u}
\oplus
(\bm{A}\bm{B})^{\ast}\bm{v},
\end{aligned}
\end{equation*}
where $\bm{u}$ and $\bm{v}$ are any parameter vectors such that both $\bm{x}$ and $\bm{y}$ are regular.
\end{lemma}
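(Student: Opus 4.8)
The plan is to collapse the two inequalities into a single inequality of the type treated in Lemma~\ref{L-Axlex}. First I would stack the unknowns into one vector and introduce the block matrix
\[
\bm{z}
=
\begin{pmatrix}\bm{x}\\\bm{y}\end{pmatrix},
\qquad
\bm{C}
=
\begin{pmatrix}\bm{0}&\bm{B}\\\bm{A}&\bm{0}\end{pmatrix}
\in
\mathbb{X}^{(n+k)\times(n+k)}.
\]
A direct block multiplication shows that $\bm{C}\bm{z}$ has upper block $\bm{B}\bm{y}$ and lower block $\bm{A}\bm{x}$, so the single inequality $\bm{C}\bm{z}\leq\bm{z}$ is, componentwise, exactly the pair $\bm{A}\bm{x}\leq\bm{y}$ and $\bm{B}\bm{y}\leq\bm{x}$ of \eqref{I-Axley-Bylex}. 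Since $\bm{z}$ is regular if and only if both $\bm{x}$ and $\bm{y}$ are regular, the regular solution pairs of \eqref{I-Axley-Bylex} correspond precisely to the regular solutions $\bm{z}$ of $\bm{C}\bm{z}\leq\bm{z}$, and Lemma~\ref{L-Axlex} applies to $\bm{C}$ verbatim.

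Next I would compute the powers of $\bm{C}$ and translate the hypothesis $\operatorname{Tr}(\bm{C})\leq\mathbb{1}$ of Lemma~\ref{L-Axlex} into a condition on $\bm{A}\bm{B}$. The block structure gives $\bm{C}^{2j}=\operatorname{diag}((\bm{B}\bm{A})^{j},(\bm{A}\bm{B})^{j})$ together with a purely off-diagonal $\bm{C}^{2j+1}$; hence $\operatorname{tr}(\bm{C}^{2j+1})=\mathbb{0}$, while the cyclic property of the trace yields $\operatorname{tr}(\bm{C}^{2j})=\operatorname{tr}((\bm{B}\bm{A})^{j})\oplus\operatorname{tr}((\bm{A}\bm{B})^{j})=\operatorname{tr}((\bm{A}\bm{B})^{j})$. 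It then remains to prove the equivalence $\operatorname{Tr}(\bm{C})\leq\mathbb{1}\Leftrightarrow\operatorname{Tr}(\bm{A}\bm{B})\leq\mathbb{1}$. If $\operatorname{Tr}(\bm{C})\leq\mathbb{1}$, I would apply the inequality $\operatorname{tr}(\bm{C}^{p})\leq\operatorname{Tr}(\bm{C})$, valid for all $p\geq1$, to get $\operatorname{tr}((\bm{A}\bm{B})^{j})=\operatorname{tr}(\bm{C}^{2j})\leq\mathbb{1}$ for every $j$, and therefore $\operatorname{Tr}(\bm{A}\bm{B})\leq\mathbb{1}$; the converse follows symmetrically from $\operatorname{tr}((\bm{A}\bm{B})^{j})\leq\operatorname{Tr}(\bm{A}\bm{B})$ and the fact that $\operatorname{Tr}(\bm{C})$ is a finite sum of exactly such terms. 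Combined with Lemma~\ref{L-Axlex}, this gives the stated existence criterion.

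Finally, for the explicit representation I would expand the Kleene star of $\bm{C}$ block by block, collecting even and odd powers and using the identities $(\bm{B}\bm{A})^{j}\bm{B}=\bm{B}(\bm{A}\bm{B})^{j}$ and $(\bm{A}\bm{B})^{j}\bm{A}=\bm{A}(\bm{B}\bm{A})^{j}$. This should yield
\[
\bm{C}^{\ast}
=
\begin{pmatrix}
(\bm{B}\bm{A})^{\ast}&\bm{B}(\bm{A}\bm{B})^{\ast}\\
\bm{A}(\bm{B}\bm{A})^{\ast}&(\bm{A}\bm{B})^{\ast}
\end{pmatrix},
\]
where both $(\bm{A}\bm{B})^{\ast}$ and $(\bm{B}\bm{A})^{\ast}$ are finite because $\operatorname{Tr}(\bm{A}\bm{B})=\operatorname{Tr}(\bm{B}\bm{A})\leq\mathbb{1}$ by the cyclic property. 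Partitioning the parameter vector of Lemma~\ref{L-Axlex} conformably into blocks $\bm{u}\in\mathbb{X}^{n}$ and $\bm{v}\in\mathbb{X}^{k}$ and reading off the two blocks of $\bm{z}=\bm{C}^{\ast}(\bm{u};\bm{v})$ would then reproduce the claimed formulas for $\bm{x}$ and $\bm{y}$, with the regularity of $\bm{z}$ becoming the required regularity of the pair.

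I expect the main obstacle to be the trace equivalence in the second step: $\operatorname{Tr}(\bm{A}\bm{B})$ accumulates powers only up to $k$, whereas $\operatorname{Tr}(\bm{C})$ runs up to $n+k$, so the two quantities need not be equal and a naive term-by-term comparison fails. The argument must instead rest on the fact that $\operatorname{tr}(\bm{M}^{p})\leq\operatorname{Tr}(\bm{M})$ holds for every $p\geq1$ rather than only up to the order of $\bm{M}$; this is what makes each condition ``$\leq\mathbb{1}$'' stable under taking arbitrary powers and hence bridges the mismatch between the two summation ranges.
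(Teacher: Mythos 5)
Your proposal is correct and follows essentially the same route as the paper: stacking the unknowns into a block vector, forming the block matrix $\bm{C}$, applying Lemma~\ref{L-Axlex}, computing the even and odd powers of $\bm{C}$ to reduce $\operatorname{Tr}(\bm{C})\leq\mathbb{1}$ to $\operatorname{Tr}(\bm{A}\bm{B})\leq\mathbb{1}$ via the bound $\operatorname{tr}(\bm{M}^{p})\leq\operatorname{Tr}(\bm{M})$ for all $p\geq1$, and then reading the solution off the block form of $\bm{C}^{\ast}$. The obstacle you flag (the mismatch between the summation ranges of $\operatorname{Tr}(\bm{C})$ and $\operatorname{Tr}(\bm{A}\bm{B})$) is exactly the point the paper handles by a case split on $n\leq k$ versus $n>k$, and your resolution of it is the same in substance.
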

\begin{proof}
First, we combine system \eqref{I-Axley-Bylex} into one inequality with a block matrix and a block vector. We introduce the notation
\begin{equation*}
\bm{C}
=
\left(
\begin{array}{cc}
\bm{0} & \bm{B}
\\
\bm{A} & \bm{0}
\end{array}
\right),
\qquad
\bm{z}
=
\left(
\begin{array}{c}
\bm{x}
\\
\bm{y}
\end{array}
\right),
\qquad
\bm{w}
=
\left(
\begin{array}{c}
\bm{u}
\\
\bm{v}
\end{array}
\right).
\end{equation*}

We observe that $\bm{C}$ is a square matrix of order $n+k$, where the upper right block $\bm{B}$ is of size $n\times k$ and the lower left block $\bm{A}$ is of size $k\times n$. The vectors $\bm{z}$ and $\bm{w}$ are of order $n+k$, whereas $\bm{u}$ is of order $n$ and $\bm{v}$ is of $k$. 

With the above notation, the system takes the form of the inequality
\begin{equation}
\bm{C}\bm{z}
\leq
\bm{z}.
\label{I-Czlez}
\end{equation}

We now apply Lemma~\ref{L-Axlex} to examine inequality \eqref{I-Czlez}. Calculation of the even and odd powers of the matrix $\bm{C}$ leads to the results
\begin{equation*}
\bm{C}^{2i}
=
\left(
\begin{array}{cc}
(\bm{B}\bm{A})^{i} & \bm{0}
\\
\bm{0} & (\bm{A}\bm{B})^{i}
\end{array}
\right),
\qquad
\bm{C}^{2i+1}
=
\left(
\begin{array}{cc}
\bm{0} & \bm{B}(\bm{A}\bm{B})^{i}
\\
\bm{A}(\bm{B}\bm{A})^{i} & \bm{0}
\end{array}
\right),
\qquad
i=0,1,\ldots
\end{equation*}

Next, we evaluate the traces of the powers of $\bm{C}$ and find that the odd powers have the trace $\operatorname{tr}(\bm{C}^{2i+1})=\mathbb{0}$. For even powers, the cyclic property of trace yields
\begin{equation*}
\operatorname{tr}(\bm{C}^{2i})
=
\operatorname{tr}(\bm{B}\bm{A})^{i}
=
\operatorname{tr}(\bm{A}\bm{B})^{i},
\end{equation*}
where the matrix products $\bm{B}\bm{A}$ and $\bm{A}\bm{B}$ under the trace operator form square matrices of orders $n$ and $k$ respectively.

Let $l=\lfloor(n+k)/2\rfloor$ denote the greatest integer less than or equal to $(n+k)/2$. According to Lemma~\ref{L-Axlex}, inequality \eqref{I-Czlez} has regular solutions if and only if 
\begin{equation}
\operatorname{Tr}(\bm{C})
=
\bigoplus_{i=1}^{n+k}
\operatorname{tr}(\bm{C}^{i})
=
\bigoplus_{i=1}^{l}
\operatorname{tr}(\bm{C}^{2i})
\leq
\mathbb{1}.
\label{I-TrC}
\end{equation}

Suppose that the relation $n\leq k$ is valid, which implies that $l\geq n$. Observing that the matrix $\bm{B}\bm{A}$ is of order $n$, we use condition \eqref{I-TrC} to write
\begin{equation*}
\operatorname{Tr}(\bm{B}\bm{A})
=
\bigoplus_{i=1}^{n}
\operatorname{tr}(\bm{B}\bm{A})^{i}
\leq
\bigoplus_{i=1}^{l}
\operatorname{tr}(\bm{B}\bm{A})^{i}
=
\bigoplus_{i=1}^{l}
\operatorname{tr}(\bm{C}^{2i})
=
\operatorname{Tr}(\bm{C})
\leq
\mathbb{1},
\end{equation*}
which shows that the condition $\operatorname{Tr}(\bm{B}\bm{A})\leq\mathbb{1}$ is a consequence of \eqref{I-TrC}.

Let us verify that the condition $\operatorname{Tr}(\bm{B}\bm{A})\leq\mathbb{1}$ yields condition \eqref{I-TrC}, and thus both conditions are equivalent to each other. Indeed, as a result of the former condition, the inequality $\operatorname{tr}(\bm{B}\bm{A})^{i}\leq\operatorname{Tr}(\bm{B}\bm{A})\leq\mathbb{1}$ holds for all integers $i\geq1$, which yields the latter condition as follows:
\begin{equation*}
\operatorname{Tr}(\bm{C})
=
\bigoplus_{i=1}^{l}
\operatorname{tr}(\bm{B}\bm{A})^{i}
\leq
\mathbb{1}.
\end{equation*}

Provided that the inequality $n>k$ holds, we use a similar argument to verify that the condition $\operatorname{Tr}(\bm{C})\leq\mathbb{1}$ is equivalent to $\operatorname{Tr}(\bm{A}\bm{B})\leq\mathbb{1}$.

Furthermore, it follows from the cyclic property of trace and the obtained conditions that for all integers $i\geq1$, we have
\begin{equation*}
\operatorname{tr}(\bm{B}\bm{A})^{i}
=
\operatorname{tr}(\bm{A}\bm{B})^{i}
\leq
\mathbb{1}.
\end{equation*}

Since all these traces have the same value not exceeding $\mathbb{1}$, we arrive at the equality $\operatorname{Tr}(\bm{A}\bm{B})=\operatorname{Tr}(\bm{B}\bm{A})$. As a result, we conclude that both conditions $\operatorname{Tr}(\bm{A}\bm{B})\leq\mathbb{1}$ and $\operatorname{Tr}(\bm{B}\bm{A})\leq\mathbb{1}$ are equivalent, and take the last condition as the existence condition for solutions of inequality \eqref{I-Czlez} and hence of system \eqref{I-Axley-Bylex}.

We are now in a position to derive all solutions of inequality \eqref{I-Czlez}. We apply Lemma~\ref{L-Axlex} to represent these solutions in parametric form as
\begin{equation*}
\bm{z}
=
\bm{C}^{\ast}\bm{w},
\end{equation*}
where $\bm{w}\ne\bm{0}$ is any parameter vector such that $\bm{z}$ is regular.

We consider the Kleene star matrix $\bm{C}^{\ast}$, which generates the solutions, and note that the existence condition at \eqref{I-TrC} yields the inequality $\bm{C}^{\ast}\geq\bm{C}^{i}$, which is valid for all integers $i\geq0$. Therefore, with $l=\max(n,k)-1$, we can write
\begin{equation*}
\bm{C}^{\ast}
=
\bigoplus_{i=0}^{n+k-1}
\bm{C}^{i}
=
\bigoplus_{i=0}^{l}
(\bm{C}^{2i}
\oplus
\bm{C}^{2i+1}).
\end{equation*}

Substitution of the expressions for the powers of $\bm{C}$ leads to the matrix
\begin{equation*}
\bm{C}^{\ast}
=
\bigoplus_{i=0}^{l}
\left(
\begin{array}{cc}
(\bm{B}\bm{A})^{i} & \bm{B}(\bm{A}\bm{B})^{i}
\\
\bm{A}(\bm{B}\bm{A})^{i} & (\bm{A}\bm{B})^{i}
\end{array}
\right).
\end{equation*}

It follows from the conditions $\operatorname{Tr}(\bm{A}\bm{B})\leq\mathbb{1}$ and $\operatorname{Tr}(\bm{B}\bm{A})\leq\mathbb{1}$ that the inequalities $(\bm{A}\bm{B})^{\ast}\geq(\bm{A}\bm{B})^{i}$ and $(\bm{B}\bm{A})^{\ast}\geq(\bm{B}\bm{A})^{i}$ hold for all integers $i\geq0$. Observing that $l\geq n-1$ and $l\geq k-1$, we have the equalities
\begin{equation*}
(\bm{A}\bm{B})^{\ast}
=
\bigoplus_{i=0}^{l}
(\bm{A}\bm{B})^{i},
\qquad
(\bm{B}\bm{A})^{\ast}
=
\bigoplus_{i=0}^{l}
(\bm{B}\bm{A})^{i}.
\end{equation*}

The application of these equalities leads to the Kleene matrix in the following form (see also \cite{Sakarovitch2009Elements}):
\begin{equation*}
\bm{C}^{\ast}
=
\left(
\begin{array}{cc}
(\bm{B}\bm{A})^{\ast} & \bm{B}(\bm{A}\bm{B})^{\ast}
\\
\bm{A}(\bm{B}\bm{A})^{\ast} & (\bm{A}\bm{B})^{\ast}
\end{array}
\right).
\end{equation*}

Finally, we turn from the solution $\bm{z}$ of inequality \eqref{I-Czlez} to the solutions $\bm{x}$ and $\bm{y}$ of system \eqref{I-Axley-Bylex} to obtain the parametric representation offered by the lemma. 
\qed
\end{proof}

\section{Solution of Two-Sided Equation}
\label{S-STSE}

In this section, we apply previous results to the solution of the following problem. Given matrices $\bm{A}\in\mathbb{X}^{m\times n}$ and $\bm{B}\in\mathbb{X}^{m\times k}$, we need to find a pair of regular vectors $\bm{x}\in\mathbb{X}^{n}$ and $\bm{y}\in\mathbb{X}^{k}$ that satisfy the equation
\begin{equation}
\bm{A}\bm{x}
=
\bm{B}\bm{y}.
\label{E-AxeqBy}
\end{equation}

Our purpose is to establish an existence condition for solutions of equation \eqref{E-AxeqBy} and to derive a solution if the condition is fulfilled. We start with a lemma that provides a necessary and sufficient condition for regular solutions to exist. 
\begin{lemma}
\label{L-AxeqBy}
A pair of regular vectors $\bm{x}$ and $\bm{y}$ is a solution of equation \eqref{E-AxeqBy} with row-regular matrices $\bm{A}$ and $\bm{B}$ if and only if it is a solution of the system
\begin{equation}
\bm{B}_{1}^{-}\bm{A}\bm{x}
\leq
\bm{y},
\qquad
\bm{A}_{1}^{-}\bm{B}\bm{y}
\leq
\bm{x}
\label{I-B1Axley-A1Bylex}
\end{equation}
for row-monomial matrices $\bm{A}_{1}$ and $\bm{B}_{1}$ obtained respectively from $\bm{A}$ and $\bm{B}$ by keeping one nonzero entry in each row unchanged and setting the others to $\mathbb{0}$.
\end{lemma}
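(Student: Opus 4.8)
The plan is to prove the equivalence by establishing the two implications separately, exploiting the two row-monomial inequalities recalled at the end of Section~\ref{S-BDPR}: for a row-monomial matrix $\bm{M}$ one has $\bm{M}^{-}\bm{M}\leq\bm{I}$ and $\bm{M}\bm{M}^{-}\geq\bm{I}$. Once the right sparsified matrices are in hand the whole argument is a few lines of isotone matrix manipulation; the only genuinely delicate point is the construction in the forward direction, where the existential quantifier on $\bm{A}_{1}$ and $\bm{B}_{1}$ must be read correctly.

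For the forward implication I would start from a regular solution of equation \eqref{E-AxeqBy} and use selectivity of the semifield. Since the order is total, in each row $i$ the sum $(\bm{A}\bm{x})_{i}=\bigoplus_{j}a_{ij}x_{j}$ attains its value at a single column index, and because $\bm{A}$ is row-regular and $\bm{x}$ is regular this maximizing entry is nonzero. I would define $\bm{A}_{1}$ by keeping, in each row, exactly this maximizing entry of $\bm{A}$ and setting the rest to $\mathbb{0}$, and define $\bm{B}_{1}$ analogously from $\bm{B}$ and $\bm{y}$. These matrices are row-monomial and, by the choice of the kept entries, satisfy $\bm{A}_{1}\bm{x}=\bm{A}\bm{x}$ and $\bm{B}_{1}\bm{y}=\bm{B}\bm{y}$. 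Then, using the equation together with $\bm{A}\bm{x}=\bm{B}_{1}\bm{y}$ and $\bm{B}_{1}^{-}\bm{B}_{1}\leq\bm{I}$, I obtain $\bm{B}_{1}^{-}\bm{A}\bm{x}=\bm{B}_{1}^{-}\bm{B}_{1}\bm{y}\leq\bm{y}$, and symmetrically $\bm{A}_{1}^{-}\bm{B}\bm{y}=\bm{A}_{1}^{-}\bm{A}_{1}\bm{x}\leq\bm{x}$, which is exactly system \eqref{I-B1Axley-A1Bylex}. This shows the solution satisfies the system for the sparsified matrices produced by the maximizing choice.

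For the backward implication I would take any row-monomial $\bm{A}_{1},\bm{B}_{1}$ obtained by the stated procedure for which system \eqref{I-B1Axley-A1Bylex} holds, and deduce the equation. Multiplying $\bm{B}_{1}^{-}\bm{A}\bm{x}\leq\bm{y}$ on the left by $\bm{B}_{1}$ and using $\bm{B}_{1}\bm{B}_{1}^{-}\geq\bm{I}$ together with $\bm{B}_{1}\leq\bm{B}$ gives the chain $\bm{A}\bm{x}\leq\bm{B}_{1}\bm{B}_{1}^{-}\bm{A}\bm{x}\leq\bm{B}_{1}\bm{y}\leq\bm{B}\bm{y}$; the symmetric manipulation of the second inequality with $\bm{A}_{1}\bm{A}_{1}^{-}\geq\bm{I}$ and $\bm{A}_{1}\leq\bm{A}$ yields $\bm{B}\bm{y}\leq\bm{A}\bm{x}$. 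The two opposite inequalities combine into $\bm{A}\bm{x}=\bm{B}\bm{y}$.

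The step I expect to require the most care is the forward direction: I must justify that the sparsification preserves the products ($\bm{A}_{1}\bm{x}=\bm{A}\bm{x}$), which rests on selectivity and on keeping precisely a maximizing entry, and I must be explicit that the claim is existential, meaning the system holds not for every sparsification but for the one matched to the given solution. Row-regularity is used exactly here, to guarantee that a nonzero entry is available in each row so that $\bm{A}_{1}$ and $\bm{B}_{1}$ are genuinely row-monomial; the backward direction, by contrast, needs nothing beyond the two row-monomial inequalities and isotonicity, and therefore holds for an arbitrary admissible choice of $\bm{A}_{1},\bm{B}_{1}$.
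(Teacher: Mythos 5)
Your proposal is correct and follows essentially the same route as the paper: selectivity picks the maximizing summand in each row (nonzero by row-regularity of the matrix and regularity of the vector), this defines the row-monomial matrices, and the inequalities $\bm{M}^{-}\bm{M}\leq\bm{I}$ and $\bm{M}\bm{M}^{-}\geq\bm{I}$ do the rest in both directions. The only cosmetic difference is that the paper first splits the equation into the two opposite inequalities $\bm{A}\bm{x}\leq\bm{B}\bm{y}$ and $\bm{A}\bm{x}\geq\bm{B}\bm{y}$ and derives $\bm{A}\bm{x}\leq\bm{B}_{1}\bm{y}$ from the first, whereas you use the equality $\bm{A}\bm{x}=\bm{B}_{1}\bm{y}$ directly; both are sound.
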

\begin{proof}
To prove the lemma, we first note that equation \eqref{E-AxeqBy} is equivalent to (and has the common solutions with) the system of the opposite vector inequalities 
\begin{equation}
\bm{A}\bm{x}
\leq
\bm{B}\bm{y},
\qquad
\bm{A}\bm{x}
\geq
\bm{B}\bm{y}.
\label{I-AxleBy-AxgeBy}
\end{equation}

Let us verify that any solution of this system and hence of equation \eqref{E-AxeqBy} is a solution of system \eqref{I-B1Axley-A1Bylex} for some row-monomial matrices $\bm{A}_{1}$ and $\bm{B}_{1}$, and vice versa. Suppose that system \eqref{I-AxleBy-AxgeBy} has a regular solution in the form of a pair of regular vectors $\bm{x}=(x_{j})$ and $\bm{y}=(y_{j})$. Consider the first vector inequality at \eqref{I-AxleBy-AxgeBy} and examine its scalar inequalities corresponding to the rows of the matrices $\bm{A}$ and $\bm{B}$. For each row $i=1,\ldots,m$, we have the scalar inequality
\begin{equation*}
a_{i1}x_{1}
\oplus\cdots\oplus
a_{in}x_{n}
\leq
b_{i1}y_{1}
\oplus\cdots\oplus
b_{ik}y_{k}.
\end{equation*}

Observing that the addition $\oplus$ is selective, we see that the sum on the right-hand side is equal to one of its summands, say to the term $b_{ij}y_{j}$ where $b_{ij}>\mathbb{0}$ due to row regularity of $\bm{B}$. Therefore, this scalar inequality implies an inequality with the right-hand side truncated to one term as $a_{i1}x_{1}\oplus\cdots\oplus a_{in}x_{n}\leq b_{ij}y_{j}$. 
  
Assume that the maximum summands are fixed on the right of the scalar inequalities for all rows. Let $\bm{B}_{1}$ be a row-monomial matrix composed from $\bm{B}$, where nonzero entries agree with the maximum summands in the rows. Then, we can combine the truncated inequalities into one vector inequality $\bm{A}\bm{x}\leq\bm{B}_{1}\bm{y}$.

Note that the matrix $\bm{B}_{1}$ is row-monomial, and thus the inequality $\bm{B}_{1}^{-}\bm{B}_{1}\leq\bm{I}$ is valid. We use this inequality to write $\bm{B}_{1}^{-}\bm{A}\bm{x}\leq\bm{B}_{1}^{-}\bm{B}_{1}\bm{y}\leq\bm{y}$, which means that the pair of vectors $\bm{x}$ and $\bm{y}$ satisfies the inequality $\bm{B}_{1}^{-}\bm{A}\bm{x}\leq\bm{y}$. By applying a similar argument, we conclude that the inequality $\bm{A}_{1}^{-}\bm{B}\bm{y}\leq\bm{x}$ is also valid for some row-monomial matrix $\bm{A}_{1}$ obtained from $\bm{A}$.

Let us now verify that any pair of vectors $\bm{x}$ and $\bm{y}$ that satisfies system \eqref{I-B1Axley-A1Bylex}, also solves system \eqref{I-AxleBy-AxgeBy}. We take the first inequality at \eqref{I-B1Axley-A1Bylex} and multiply it by $\bm{B}$ from the left to obtain $\bm{B}\bm{B}_{1}^{-}\bm{A}\bm{x}\leq\bm{B}\bm{y}$. Since $\bm{B}\geq\bm{B}_{1}$ and $\bm{B}_{1}\bm{B}_{1}^{-}\geq\bm{I}$, we have $\bm{A}\bm{x}\leq\bm{B}_{1}\bm{B}_{1}^{-}\bm{A}\bm{x}\leq\bm{B}\bm{B}_{1}^{-}\bm{A}\bm{x}\leq\bm{B}\bm{y}$, which yields the inequality $\bm{A}\bm{x}\leq\bm{B}\bm{y}$.

The opposite inequality $\bm{A}\bm{x}\geq\bm{B}\bm{y}$ is obtained in a similar way.
\qed
\end{proof}

The next theorem combines previous results into one statement which provides a convenient way to handle equation \eqref{E-AxeqBy}. One can consider some related results in the context of the analysis of tropical polyhedral cones in \cite{Develin2004Tropical,Truffet2010Decomposition}.

\begin{theorem}
\label{T-AxeqBy}
Let $\bm{A}$ and $\bm{B}$ be row-regular matrices, and $\mathcal{A}_{1}$ and $\mathcal{B}_{1}$ be the sets of row-monomial matrices obtained respectively from $\bm{A}$ and $\bm{B}$ by keeping one nonzero entry in each row unchanged and setting the others to $\mathbb{0}$.

Then, equation \eqref{E-AxeqBy} has regular solutions if and only if there exist matrices $\bm{A}_{1}\in\mathcal{A}_{1}$ and $\bm{B}_{1}\in\mathcal{B}_{1}$ such that the following condition holds:
\begin{equation}
\operatorname{Tr}(\bm{A}\bm{A}_{1}^{-}\bm{B}\bm{B}_{1}^{-})
=
\mathbb{1}.
\label{I-TrAA1BB1eq1}
\end{equation}

Under this condition, all solutions are given in parametric form by
\begin{equation}
\begin{aligned}
\bm{x}
&=
(\bm{A}_{1}^{-}\bm{B}\bm{B}_{1}^{-}\bm{A})^{\ast}\bm{u}
\oplus
\bm{A}_{1}^{-}\bm{B}(\bm{B}_{1}^{-}\bm{A}\bm{A}_{1}^{-}\bm{B})^{\ast}\bm{v},
\\
\bm{y}
&=
\bm{B}_{1}^{-}\bm{A}(\bm{A}_{1}^{-}\bm{B}\bm{B}_{1}^{-}\bm{A})^{\ast}\bm{u}
\oplus
(\bm{B}_{1}^{-}\bm{A}\bm{A}_{1}^{-}\bm{B})^{\ast}\bm{v},
\end{aligned}
\label{E-x-y}
\end{equation}
where $\bm{u}$ and $\bm{v}$ are any parameter vectors such that both $\bm{x}$ and $\bm{y}$ are regular.

Otherwise, the equation has no regular solutions.
\end{theorem}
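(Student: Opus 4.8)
The plan is to stitch together the two preceding lemmas. By Lemma~\ref{L-AxeqBy}, a pair of regular vectors $\bm{x},\bm{y}$ solves equation~\eqref{E-AxeqBy} if and only if it solves system~\eqref{I-B1Axley-A1Bylex} for some row-monomial matrices $\bm{A}_{1}\in\mathcal{A}_{1}$ and $\bm{B}_{1}\in\mathcal{B}_{1}$. The first step is to recognize~\eqref{I-B1Axley-A1Bylex} as an instance of system~\eqref{I-Axley-Bylex} under the substitutions $\bm{B}_{1}^{-}\bm{A}$ in place of the lemma's matrix $\bm{A}$ (a matrix in $\mathbb{X}^{k\times n}$) and $\bm{A}_{1}^{-}\bm{B}$ in place of the lemma's $\bm{B}$ (a matrix in $\mathbb{X}^{n\times k}$). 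With these identifications I would invoke Lemma~\ref{L-Axley-Bylex} and read off both the existence condition and the parametric description of the solution set.

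The solution representation then follows mechanically: substituting the two matrices into the formulas of Lemma~\ref{L-Axley-Bylex} and expanding the products $(\bm{A}_{1}^{-}\bm{B})(\bm{B}_{1}^{-}\bm{A})$ and $(\bm{B}_{1}^{-}\bm{A})(\bm{A}_{1}^{-}\bm{B})$ reproduces~\eqref{E-x-y} verbatim. For the existence criterion, Lemma~\ref{L-Axley-Bylex} yields $\operatorname{Tr}((\bm{B}_{1}^{-}\bm{A})(\bm{A}_{1}^{-}\bm{B}))\leq\mathbb{1}$, and the cyclic property of $\operatorname{Tr}$ rewrites the left-hand side as $\operatorname{Tr}(\bm{A}\bm{A}_{1}^{-}\bm{B}\bm{B}_{1}^{-})$, matching the expression in~\eqref{I-TrAA1BB1eq1}. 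Assembling the chain of equivalences---regular solvability of~\eqref{E-AxeqBy}, regular solvability of~\eqref{I-B1Axley-A1Bylex} for some $\bm{A}_{1},\bm{B}_{1}$, and the trace condition for some such matrices---is then routine.

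The one step requiring genuine work, and the main obstacle, is explaining why~\eqref{I-TrAA1BB1eq1} is stated as an \emph{equality} rather than the inequality $\leq\mathbb{1}$ that Lemma~\ref{L-Axley-Bylex} directly supplies. My approach is to establish the complementary bound $\operatorname{Tr}(\bm{A}\bm{A}_{1}^{-}\bm{B}\bm{B}_{1}^{-})\geq\mathbb{1}$, which holds unconditionally. Since $\bm{A}_{1}$ and $\bm{B}_{1}$ arise from $\bm{A}$ and $\bm{B}$ by zeroing entries, we have $\bm{A}\geq\bm{A}_{1}$ and $\bm{B}\geq\bm{B}_{1}$; combined with the row-monomial bounds $\bm{A}_{1}\bm{A}_{1}^{-}\geq\bm{I}$ and $\bm{B}_{1}\bm{B}_{1}^{-}\geq\bm{I}$ from the preliminaries, isotonicity of multiplication gives $\bm{A}\bm{A}_{1}^{-}\geq\bm{I}$ and $\bm{B}\bm{B}_{1}^{-}\geq\bm{I}$, whence $\bm{A}\bm{A}_{1}^{-}\bm{B}\bm{B}_{1}^{-}\geq\bm{I}$. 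Monotonicity of $\operatorname{Tr}$ together with $\operatorname{Tr}(\bm{I})=\mathbb{1}$ then forces $\operatorname{Tr}(\bm{A}\bm{A}_{1}^{-}\bm{B}\bm{B}_{1}^{-})\geq\mathbb{1}$, and pinching this against the upper bound from Lemma~\ref{L-Axley-Bylex} delivers the equality. Everything else in the proof is substitution and bookkeeping.
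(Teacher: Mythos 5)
Your proposal is correct and follows essentially the same route as the paper: reduce via Lemma~\ref{L-AxeqBy} to the systems \eqref{I-B1Axley-A1Bylex}, apply Lemma~\ref{L-Axley-Bylex} with $\bm{B}_{1}^{-}\bm{A}$ and $\bm{A}_{1}^{-}\bm{B}$ in place of its $\bm{A}$ and $\bm{B}$, use the cyclic property of $\operatorname{Tr}$, and upgrade the inequality to equality via the lower bound $\operatorname{Tr}(\bm{A}\bm{A}_{1}^{-}\bm{B}\bm{B}_{1}^{-})\geq\mathbb{1}$. Your derivation of that lower bound from $\bm{A}\bm{A}_{1}^{-}\geq\bm{A}_{1}\bm{A}_{1}^{-}\geq\bm{I}$ and $\bm{B}\bm{B}_{1}^{-}\geq\bm{B}_{1}\bm{B}_{1}^{-}\geq\bm{I}$ is in fact stated more carefully than the paper's own wording, which cites the inequalities in the form $\bm{A}_{1}^{-}\bm{A}\geq\bm{I}$ and $\bm{B}_{1}^{-}\bm{B}\geq\bm{I}$ (an apparent slip, since these need not hold).
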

\begin{proof}
We begin with an application of Lemma~\ref{L-AxeqBy} to reduce equation \eqref{E-AxeqBy} to the set of systems \eqref{I-B1Axley-A1Bylex}, where each system is defined by a pair of row-monomial matrices $\bm{A}_{1}\in\mathcal{A}$ and $\bm{B}_{1}\in\mathcal{B}$ obtained from $\bm{A}$ and $\bm{B}$ as specified in the theorem.

To verify whether a system has regular solutions, we use the existence condition given by Lemma~\ref{L-Axley-Bylex}, where $\bm{A}$ is replaced by $\bm{B}_{1}^{-}\bm{A}$ and $\bm{B}$ by $\bm{A}_{1}^{-}\bm{B}$, to write $\operatorname{Tr}(\bm{A}_{1}^{-}\bm{B}\bm{B}_{1}^{-}\bm{A})\leq\mathbb{1}$. Furthermore, it follows from the cyclic property of the trace function that $\operatorname{Tr}(\bm{A}_{1}^{-}\bm{B}\bm{B}_{1}^{-}\bm{A})=\operatorname{Tr}(\bm{A}\bm{A}_{1}^{-}\bm{B}\bm{B}_{1}^{-})$. Since the inequalities $\bm{A}_{1}^{-}\bm{A}\geq\bm{I}$ and $\bm{B}_{1}^{-}\bm{B}\geq\bm{I}$ always hold, and hence $\operatorname{Tr}(\bm{A}\bm{A}_{1}^{-}\bm{B}\bm{B}_{1}^{-})\geq\mathbb{1}$, the existence condition takes the form of \eqref{I-TrAA1BB1eq1}.

Finally, with the substitution of $\bm{A}$ by $\bm{B}_{1}^{-}\bm{A}$ and $\bm{B}$ by $\bm{A}_{1}^{-}\bm{B}$ in the solution provided by Lemma~\ref{L-Axley-Bylex}, leads to \eqref{E-x-y}.
\qed
\end{proof}

\section{Numerical Examples}
\label{S-NE}
We now illustrate the obtained results with a solution of the problem examined in \cite{Cuninghamegreen2003Equation} in the framework of the semifield $\mathbb{R}_{\max,+}$, also known as $(\max,+)$-algebra.

Consider equation \eqref{L-AxeqBy} with the given matrices and unknown vectors defined (with the use of the symbol $\mathbb{0}=-\infty$) as follows:
\begin{equation*}
\bm{A}
=
\left(
\begin{array}{ccc}
3 & \mathbb{0} & 0
\\
1 & 1 & 0
\\
\mathbb{0} & 1 & 2
\end{array}
\right),
\qquad
\bm{B}
=
\left(
\begin{array}{cc}
1 & 1
\\
3 & 2
\\
3 & 1
\end{array}
\right),
\qquad
\bm{x}
=
\left(
\begin{array}{c}
x_{1}
\\
x_{2}
\\
x_{3}
\end{array}
\right),
\qquad
\bm{y}
=
\left(
\begin{array}{c}
y_{1}
\\
y_{2}
\end{array}
\right).
\end{equation*}

The solution, which is obtained in \cite{Cuninghamegreen2003Equation}, takes the form
\begin{equation*}
\bm{x}
=
\left(
\begin{array}{c}
0
\\
3
\\
2
\end{array}
\right),
\qquad
\bm{y}
=
\left(
\begin{array}{c}
1
\\
2
\end{array}
\right).
\end{equation*}

To solve the problem by applying Theorem~\ref{T-AxeqBy}, we have to examine row-monomial matrices $\bm{A}_{1}$ and $\bm{B}_{1}$ formed by fixing one nonzero entry in each rows of the matrices $\bm{A}$ and $\bm{B}$. For each selection of the nonzero entries, we verify the condition at \eqref{I-TrAA1BB1eq1}. A pair of matrices $\bm{A}_{1}$ and $\bm{B}_{1}$ is accepted to construct a solution according to \eqref{E-x-y} if the condition holds, or this pair is rejected otherwise.

Let us consider row-monomial matrices obtained from $\bm{A}$ and $\bm{B}$ in the from
\begin{equation*}
\bm{A}_{1}
=
\left(
\begin{array}{ccc}
3 & \mathbb{0} & \mathbb{0}
\\
\mathbb{0} & 1 & \mathbb{0}
\\
\mathbb{0} & \mathbb{0} & 2
\end{array}
\right),
\qquad
\bm{B}_{1}
=
\left(
\begin{array}{cc}
\mathbb{0} & 1
\\
3 & \mathbb{0}
\\
3 & \mathbb{0}
\end{array}
\right).
\end{equation*}

We start with calculating the matrices
\begin{gather*}
\bm{A}_{1}^{-}\bm{B}
=
\left(
\begin{array}{rr}
-2 & -2
\\
2 & 1
\\
1 & -1
\end{array}
\right),
\qquad
\bm{B}_{1}^{-}\bm{A}
=
\left(
\begin{array}{rrr}
-2 & -2 & -1
\\
2 & \mathbb{0} & -1
\end{array}
\right),
\\
\bm{A}\bm{A}_{1}^{-}
=
\left(
\begin{array}{rcr}
0 & \mathbb{0} & -2
\\
-2 & 0 & -2
\\
\mathbb{0} & 0 & 0
\end{array}
\right),
\qquad
\bm{B}\bm{B}_{1}^{-}
=
\left(
\begin{array}{rrr}
0 & -2 & -2
\\
1 & 0 & 0
\\
0 & 0 & 0
\end{array}
\right),
\end{gather*}
which are then used to obtain the matrices 
\begin{gather*}
\bm{A}_{1}^{-}\bm{B}
\bm{B}_{1}^{-}\bm{A}
=
\left(
\begin{array}{crr}
0 & -4 & -3
\\
3 & 0 & 1
\\
1 & -1 & 0
\end{array}
\right),
\qquad
\bm{B}_{1}^{-}\bm{A}
\bm{A}_{1}^{-}\bm{B}
=
\left(
\begin{array}{cr}
0 & -1
\\
0 & 0
\end{array}
\right),
\\
\bm{A}\bm{A}_{1}^{-}
\bm{B}\bm{B}_{1}^{-}
=
\left(
\begin{array}{crr}
0 & -2 & -2
\\
1 & 0 & 0
\\
1 & 0 & 0
\end{array}
\right).
\end{gather*}

After evaluation of the trace function, we obtain $\operatorname{Tr}(\bm{A}\bm{A}_{1}^{-}\bm{B}\bm{B}_{1}^{-})=0=\mathbb{1}$, and therefore conclude that the equation has regular solutions.

To represent the solution in the form offered by Theorem~\ref{T-AxeqBy}, we first evaluate the matrices
\begin{gather*}
(\bm{A}_{1}^{-}\bm{B}
\bm{B}_{1}^{-}\bm{A})^{2}
=
\left(
\begin{array}{crr}
0 & -4 & -3
\\
3 & 0 & 1
\\
2 & -1 & 0
\end{array}
\right),
\qquad
(\bm{A}_{1}^{-}\bm{B}
\bm{B}_{1}^{-}\bm{A})^{\ast}
=
\left(
\begin{array}{crr}
0 & -4 & -3
\\
3 & 0 & 1
\\
2 & -1 & 0
\end{array}
\right),
\\
\bm{B}_{1}^{-}\bm{A}
(\bm{A}_{1}^{-}\bm{B}
\bm{B}_{1}^{-}\bm{A})^{\ast}
=
\left(
\begin{array}{crr}
1 & -2 & -1
\\
2 & -2 & -1
\end{array}
\right),
\end{gather*}
and then the matrices
\begin{equation*}
(\bm{B}_{1}^{-}\bm{A}
\bm{A}_{1}^{-}\bm{B})^{\ast}
=
\left(
\begin{array}{cr}
0 & -1
\\
0 & 0
\end{array}
\right),
\qquad
\bm{A}_{1}^{-}\bm{B}
(\bm{B}_{1}^{-}\bm{A}
\bm{A}_{1}^{-}\bm{B})^{\ast}
=
\left(
\begin{array}{rr}
-2 & -2
\\
2 & 1
\\
1 & 0
\end{array}
\right).
\end{equation*}

With the matrices obtained and nonzero vectors of parameters denoted by $\bm{u}=(u_{1},u_{2},u_{3})^{T}$ and $\bm{v}=(v_{1},v_{2})^{T}$, the solution given by \eqref{E-x-y} becomes 
\begin{align*}
\bm{x}
&=
\left(
\begin{array}{crr}
0 & -4 & -3
\\
3 & 0 & 1
\\
2 & -1 & 0
\end{array}
\right)\bm{u}
\oplus
\left(
\begin{array}{rr}
-2 & -2
\\
2 & 1
\\
1 & 0
\end{array}
\right)\bm{v},
\\
\bm{y}
&=
\left(
\begin{array}{crr}
1 & -2 & -1
\\
2 & -2 & -1
\end{array}
\right)\bm{u}
\oplus
\left(
\begin{array}{cr}
0 & -1
\\
0 & 0
\end{array}
\right)\bm{v}.
\end{align*}

Furthermore, we can simplify the obtained solution by taking into account the decomposition of matrices 
\begin{gather*}
\left(
\begin{array}{crr}
0 & -4 & -3
\\
3 & 0 & 1
\\
2 & -1 & 0
\end{array}
\right)
=
\left(
\begin{array}{rr}
-2 & -2
\\
2 & 1
\\
1 & 0
\end{array}
\right)
\left(
\begin{array}{crr}
\mathbb{0} & -2 & -1
\\
2 & \mathbb{0} & \mathbb{0}
\end{array}
\right),
\\
\left(
\begin{array}{crr}
1 & -2 & -1
\\
2 & -2 & -1
\end{array}
\right)
=
\left(
\begin{array}{cr}
0 & -1
\\
0 & 0
\end{array}
\right)
\left(
\begin{array}{crr}
\mathbb{0} & -2 & -1
\\
2 & \mathbb{0} & \mathbb{0}
\end{array}
\right).
\end{gather*}

By applying this decomposition, we introduce a new nonzero vector of parameters $\bm{t}=(t_{1},t_{2})^{T}$ given by
\begin{equation*}
\bm{t}
=
\left(
\begin{array}{crr}
\mathbb{0} & -2 & -1
\\
2 & \mathbb{0} & \mathbb{0}
\end{array}
\right)
\bm{u}
\oplus
\bm{v},
\end{equation*}
and then represent the solution in the form
\begin{equation*}
\bm{x}
=
\left(
\begin{array}{rr}
-2 & -2
\\
2 & 1
\\
1 & 0
\end{array}
\right)\bm{t},
\qquad
\bm{y}
=
\left(
\begin{array}{cr}
0 & -1
\\
0 & 0
\end{array}
\right)\bm{t}.
\end{equation*}

It is not difficult to verify, that with $\bm{t}=(1,2)^{T}$, we have the solution
\begin{equation*}
\bm{x}
=
\left(
\begin{array}{c}
0
\\
3
\\
2
\end{array}
\right),
\qquad
\bm{y}
=
\left(
\begin{array}{c}
1
\\
2
\end{array}
\right),
\end{equation*}
which coincides with the solution found in \cite{Cuninghamegreen2003Equation}.

Suppose now that a different choice of nonzero entries in the matrices $\bm{A}$ and $\bm{B}$ is made, which results in the matrices
\begin{equation*}
\bm{A}_{1}
=
\left(
\begin{array}{ccc}
3 & \mathbb{0} & \mathbb{0}
\\
1 & \mathbb{0} & \mathbb{0}
\\
\mathbb{0} & \mathbb{0} & 2
\end{array}
\right),
\qquad
\bm{B}_{1}
=
\left(
\begin{array}{cc}
1 & \mathbb{0}
\\
3 & \mathbb{0}
\\
3 & \mathbb{0}
\end{array}
\right).
\end{equation*}

We calculate the following matrices:
\begin{gather*}
\bm{A}\bm{A}_{1}^{-}
=
\left(
\begin{array}{rcr}
0 & 2 & -2
\\
-2 & 0 & -2
\\
\mathbb{0} & \mathbb{0} & 0
\end{array}
\right),
\qquad
\bm{B}\bm{B}_{1}^{-}
=
\left(
\begin{array}{crr}
0 & -2 & -2
\\
2 & 0 & 0
\\
2 & 0 & 0
\end{array}
\right),
\\
\bm{A}\bm{A}_{1}^{-}
\bm{B}\bm{B}_{1}^{-}
=
\left(
\begin{array}{rcr}
4 & 2 & 2
\\
2 & 0 & 0
\\
2 & 0 & 0
\end{array}
\right).
\end{gather*}

The evaluation of the trace function yields $\operatorname{Tr}(\bm{A}\bm{A}_{1}^{-}\bm{B}\bm{B}_{1}^{-})=4>0=\mathbb{1}$, which means that in this case, the existence condition at \eqref{I-TrAA1BB1eq1} is not fulfilled, and hence the matrices $\bm{A}_{1}$ and $\bm{B}_{1}$ cannot provide a solution to equation \eqref{L-AxeqBy}.

\section{Conclusion}
\label{S-C}
We examined a two-sided vector equation defined in the framework of tropical algebra, where two unknown vectors appear on both sides of the equation. An approach has been proposed to represent an existence condition for solution of the equation and the solution itself in a compact vector form. According to the result of Lemma~\ref{L-AxeqBy}, each solution of the equation is determined by a pair of proper row-monomial matrices $\bm{A}_{1}$ and $\bm{B}_{1}$, for which the existence condition given by Theorem~\ref{T-AxeqBy} holds. Since these matrices are directly used to form generating matrices for solutions, all solutions of the two-sided equation can be obtained by finding all pairs of proper matrices $\bm{A}_{1}$ and $\bm{B}_{1}$.

Note however that the number of pairs of row-monomial matrices to examine grows exponentially with the number of scalar equations in \eqref{L-AxeqBy} (and polynomially with the dimensions of the unknown vectors), which may lead to unreasonably long solution time. Therefore, further development of an economical computational scheme to address this issue is of great importance. As an possible solution, one can consider the backtracking algorithms proposed to improve efficiency of matrix sparsification techniques in \cite{Krivulin2018Solution,Krivulin2020Complete}. Another approach may consist in successive examination of a series of systems that include from $1$ to $m$ scalar equations from \eqref{L-AxeqBy} to reduce the computational time by early detection of inappropriate selection of nonzero elements that violate condition \eqref{I-TrAA1BB1eq1}.

\subsubsection*{Acknowledgments}
The author is very grateful to three anonymous reviewers for their valuable comments and suggestions, which have been incorporated in the revised manuscript.

\bibliographystyle{abbrvurl}

\bibliography{Using_matrix_sparsification_to_solve_tropical_linear_vector_equations}

\end{document}